\newtheorem{lema}{Lemma}[section]
\newtheorem{corolario}[lema]{Corollary}
\newtheorem{theorem}[lema]{Theorem}
\newtheorem*{Hitt-Sarason teo}{Theorem (Hitt, Sarason)}
\newtheorem{proposition}[lema]{Proposition}
\newtheorem{remark}[lema]{Remark}
\newtheorem{definicion}[lema]{Definition}
\hfill \fbox{}}
\hfill \fbox{}}
\def\CC{{\mathbb C}}
\def\TT{{\mathbb T}}
\def\dim{\mathop{\rm dim}\nolimits}
\begin{document}

\title[A Beurling Theorem for almost-invariant subspaces]
{A Beurling Theorem for almost-invariant \\
subspaces of the shift operator}
\author{Isabelle Chalendar}
\address{Universit\'e Paris Est Marne-la-Vall\'ee, \newline 5 bd Descartes, Champs-sur-Marne \newline
77454 Marne-la-Vall\'ee, cedex 2, France.}
\email{isabelle.chalendar@u-pem.fr}	
\author{Eva A. Gallardo-Guti\'errez}
\address{
Universidad Complutense de Madrid e ICMAT\newline
Departamento de An\'alisis Matem\'atico,\,\newline
Facultad de Ciencias Matem\'aticas,\newline
Plaza de Ciencias 3\newline
28040, Madrid (SPAIN)}
\email{eva.gallardo@mat.ucm.es}
\author{Jonathan R. Partington}
\address{School of Mathematics,\newline
 University of Leeds,\newline
Leeds LS2 9JT, U.K.} \email{J.R.Partington@leeds.ac.uk}

\thanks{Second and third author are partially supported by Plan Nacional I+D grant no. MTM2016-77710-P}

\subjclass{Primary 47B38}
\keywords{Beurling Theorem, half-invariant subspaces}
\date{September 2018, revised April 2019}
%\dedicatory{}
%\commby{}

% ----------------------------------------------------------------------

\begin{abstract}
A complete characterization of
nearly-invariant subspaces of finite defect for the backward
shift operator acting on the Hardy space is provided in the spirit of Hitt and Sarason's theorem.
As a corollary we describe the almost-invariant subspaces for the shift and its adjoint.
\end{abstract}

% ----------------------------------------------------------------------
%\begin{center}
\maketitle
%\end{center}
% ----------------------------------------------------------------------

\section{Introduction}

Let $\mathcal{B}$ be an infinite-dimensional separable complex Banach space, and $T\in \mathcal{L}(\mathcal{B})$ a linear bounded operator
on $\mathcal{B}$. A subspace, that is,  a closed linear manifold $M$ is called \emph{invariant} if $T(M)\subset M$.
Further,
$M$ is said to be \emph{almost-invariant} if there exists a finite-dimensional subspace $F$
of $\mathcal{B}$  such that
$$
TM\subset M+F.
$$
In such a case, the smallest possible dimension of such $F$ is called the \emph{defect} of the space $M$.

A well-known feature is that the structure of the invariant subspaces of an operator $T$ plays an important role in
giving a better understanding of its action on the whole space.
To that aim, Androulakis, Popov, Tcaciuc and Troitsky \cite{APTT09} initiated in 2009  the study of almost-invariant half-spaces of operators $T$ acting on complex Banach spaces.
Recall that a half-space is a space of infinite dimension and infinite codimension. Observe also that  every subspace $M$ of $\mathcal{B}$  that is not a half-space is clearly almost-invariant under any operator.

In 2013, Popov and Tcaciuc \cite{PT13} proved that adjoint operators on dual spaces have almost-invariant half-spaces;
and in particular every operator on a complex  infinite-dimensional \emph{reflexive} Banach space has an almost-invariant half-space. Recently, Sirotkin and Wallis \cite{SW14} have
studied the structure of almost-invariant half-spaces of some operators, proving, in particular, that every quasinilpotent operator  on any infinite dimensional separable complex Banach space $\mathcal{B}$ (not necessarily reflexive) admits an almost-invariant half-space. A recent preprint of Tcaciuc \cite{TC} shows that the same holds for \emph{any}
linear bounded operator acting on $\mathcal{B}$ (not necessarily reflexive).

As Androulakis et al. \cite{APTT09} pointed out, the natural question whether the usual unilateral right shift operator $S$ acting on the Hilbert space $H^2$ has almost-invariant half-spaces has an affirmative answer. It is well known that this operator has even invariant half-spaces.
Indeed, by Beurling's Theorem \cite{Beu}, any shift invariant subspace has the form $\theta H^2$, where $\theta$ is an inner function, that is, an analytic function in the unit disc $\mathbb{D}$ with contractive values ($|\theta(z)|\leq 1$ for $z\in \mathbb{D}$) such that its boundary  values
$$
\theta(e^{it}):=\lim_{r\to 1^{-}} \theta(re^{it})
$$
(which exists for almost every $e^{it}$ with respect to Lebesgue measure on the unit circle) have modulus one for almost all $e^{it}$. Moreover, every inner function $\theta$ can be factorized, in principle, as a product of two inner functions: one collecting all the zeroes of $\theta$ in $\mathbb{D}$ (a Blaschke product), and the other, lacking zeroes in $\mathbb{D}$, a singular inner function (i.e., it can be expressed by means of an integral formula involving a singular measure on the unit circle) (see \cite{Hoffman}, for instance). From here, it is not difficult to see that $M$ is an invariant half-space for $S$ if and only if $M=\theta H^2$ with $\theta$ not a finite Blaschke product.

The aim of this work is studying almost-invariant spaces for the unilateral shift operator in the Hardy space. We will provide a complete characterization in terms of nearly invariant subspaces for the adjoint $S^*$. Recall that a subspace $M$ is nearly invariant for $S^*$ if $S^*f\in M$ whenever  $f\in M$ and $f(0)=0$. This concept can be traced back to Sarason's work \cite{Sa} (see also \cite{hitt}, where they were called \emph{weakly invariant}).

The rest of the paper is organized as follow. In Section \ref{Sec2} we recall some preliminaries and observe that every nearly invariant subspace for $S^*$ is indeed an almost-invariant subspace for $S$. In Section \ref{Sec3}, we will prove our main theorem. To that end, we introduce the definition of nearly invariant subspaces with defect $m$ for $S^*$, as a generalization of nearly invariant subspaces, and classify them together with the almost-invariant
subspaces. As a consequence we can describe the almost-invariant subspaces for $S$.
In Section \ref{sec:4} we discuss the same issues for the bilateral shift on $L^2(\TT)$.
We also provide examples of almost-invariant subspaces for the unilateral and bilateral shifts that do not contain
any nontrivial invariant subspaces.

\section{A first approach: nearly invariant subspaces for $S^*$}\label{Sec2}

Let $\mathbb{D}$ denote the open unit disc of the complex plane and $H^2$ the classical Hardy space, that is, the space consisting of analytic functions $f$ on $\mathbb{D}$
such that the norm
$$
\|f\|=\left (\sup_{0\leq r<1} \int_{0}^{2\pi}  |f(re^{it})|^2 \frac{dt}{2\pi} \right )^{1/2}
$$
is finite. A classical result due to Fatou (see \cite{Du}, for instance) states that the radial limit $f(e^{it}):= \lim_{r\to 1^{-}} f(re^{it})$ exists a.e. on the boundary $\mathbb{T}$.
In this regard, it is well known that $H^2$ can be regarded as a closed subspace of $L^2(\mathbb{T})$, and
moreover, $L^2(\mathbb{T})$ may be decomposed in the following way
$$L^2(\mathbb{T})=H^2 \oplus \overline{H^2_0}, $$
where $\overline{H^2_0}=\{ f\in L^2(\mathbb{T}): \overline{f}\in H^2  \mbox{ and } f(0)=0\}$.  Note that in the above identity we
are identifying $H^2$ through the non-tangential boundary values of the $H^2$ functions. Throughout this paper, $\langle \; ,\;  \rangle$ will denote the inner product in $L^2(\mathbb{T})$.

Let $S$ denote the unilateral shift acting on $H^2$, that is, $Sf(z)=zf(z)$, for $z\in \mathbb{D}$. The adjoint $S^*$ is defined in $H^2$ as the operator
$$
S^* f(z)= \frac{f(z)-f(0)}{z}, \qquad (z\in \mathbb{D}),
$$
for $f\in H^2$.
As  was pointed out in the introduction, Beurling's Theorem \cite{Beu} provides a complete characterization of the lattice of the invariant subspaces of $S$; and therefore of the lattice of the invariant
subspaces for $S^*$; that is, $K_{\theta}:=(\theta H^2)^{\perp}$, with $\theta$ an inner function. These spaces are usually referred to as model spaces (we refer to Nikolskii's monograph \cite{Ni} for more on the subject).

The concept of nearly invariant subspace for $S^*$,  already mentioned and defined in the introduction,  was introduced by Sarason in \cite{Sa}.

\begin{definicion}\label{def:nis}
A closed subspace $M \subset H^2$ is said to be nearly invariant for $S^*$ if
whenever $f \in M$ and $f(0)=0$,  then $S^*f \in M$.
\end{definicion}

Nearly invariant subspaces for $S^*$ were characterized
 by Hitt \cite{hitt} and Sarason \cite{Sa}.  More precisely, any nontrivial nearly invariant subspace has the form $M=g K$ where $g$ is the element of $M$ of unit norm which has positive value at the origin and is orthogonal to all elements of $M$ vanishing at the origin (the reproducing kernel   in $M$ at 0),
$K$ is
an $S^*$-invariant subspace (so, if nontrivial,  $K_\theta$ for some inner function $\theta$), and the operator $M_g$ of multiplication by $g$ is everywhere defined and isometric from $K$ into $H^2$.

Our first observation provides a link between  nearly invariant subspaces for $S^*$ and  almost-invariant spaces for $S$.

\begin{proposition}\label{prop1}
Every nearly invariant subspace $M=gK_{\theta}$ for $S^*$   is almost-invariant for $S$  with defect 1. Moreover, if $\theta$ is not rational, it  is an almost-invariant half-space with defect 1.
\end{proposition}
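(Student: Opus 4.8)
The plan is to exploit the Hitt--Sarason structure $M = gK_\theta$ together with the fact that $M_g : K_\theta \to H^2$ is isometric. The key observation is that $S$ acting on $M = gK_\theta$ can be compared with $S$ acting on $gK_\theta$ via the identity $S(gh) = z g h$ for $h \in K_\theta$; since $zh$ need not lie in $K_\theta$, one must track by how much it fails. Writing $P_\theta$ for the orthogonal projection onto $K_\theta$, one has $zh = P_\theta(zh) + (I - P_\theta)(zh)$, and the standard description of $K_\theta$ gives $(I - P_\theta)(zh) \in \theta H^2$; in fact a brief computation shows $(I-P_\theta)(zh) = \langle zh, \theta\rangle \theta = \langle h, S^*\theta\rangle\theta$, a scalar multiple of the single fixed function $\theta$. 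Hence $S(gh) = g P_\theta(zh) + \langle h, S^*\theta\rangle\, g\theta$, and the correction term always lies in the one-dimensional space $F := \CC\, g\theta$ (note $g\theta \in H^2$ since $g \in H^2$ and $\theta$ is bounded). This gives $SM \subset M + F$ with $\dim F \le 1$, so the defect is at most $1$.

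Next I would argue the defect is exactly $1$, i.e. $SM \not\subset M$. If $SM \subset M$ then $M$ would be a nontrivial invariant subspace for $S$, hence $M = \psi H^2$ for some inner $\psi$ by Beurling's theorem. But then $M = \psi H^2 = gK_\theta$, and one can compare: $\psi H^2$ has the property that the reproducing kernel at $0$ (when $\psi(0)\neq 0$) is a constant multiple of $\psi$, forcing $g$ to be inner and $K_\theta$ to equal $H^2$, i.e. $\theta$ constant and $M = H^2$ — excluded since $M$ is assumed nontrivial. (If $\psi(0) = 0$ then $g(0) = 0$, contradicting $g(0) > 0$.) So the defect is precisely $1$.

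For the second assertion, I must show that when $\theta$ is not rational, $M = gK_\theta$ is a half-space, i.e. both $\dim M = \infty$ and $\mathrm{codim}\, M = \infty$. Since $M_g$ is isometric from $K_\theta$ onto $M$, we have $\dim M = \dim K_\theta$ and it suffices to understand $K_\theta = (\theta H^2)^\perp$. Now $K_\theta$ is finite-dimensional precisely when $\theta$ is a finite Blaschke product (then $\dim K_\theta = \deg\theta$); in all other cases $\dim K_\theta = \infty$. As for the codimension: $H^2 \ominus M$ — here I would use that $M^\perp \supset \theta H^2$ is false in general, so instead I argue $\mathrm{codim}\,M = \infty$ directly. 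Since $M = gK_\theta$ and the codimension of $K_\theta = (\theta H^2)^\perp$ in $H^2$ equals $\dim \theta H^2 = \infty$ always, the isometry $M_g$ carried over plus a dimension-count on $g^{-1}(M^\perp)$-type argument (or simply: $M \subset gH^2$ whose codimension is already infinite when $g$ is non-constant, and when $g$ is constant $M = cK_\theta$ has infinite codimension since $\theta$ is non-constant) shows $\mathrm{codim}\, M = \infty$. The only case to rule out for ``half-space'' is $\theta$ a finite Blaschke product, and non-rationality of $\theta$ excludes exactly that (a finite Blaschke product is rational).

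The step I expect to be the main obstacle is the careful bookkeeping in showing the defect is exactly $1$ rather than $0$: one needs the precise form of the Hitt--Sarason decomposition and the normalization $g(0) > 0$, $\|g\| = 1$, and must handle the degenerate possibility that $M$ is already $S$-invariant without circularity. The dimension/codimension count for the half-space claim is then comparatively routine once the non-rationality hypothesis is invoked to exclude finite Blaschke products.
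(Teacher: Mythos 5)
Your argument for the defect bound is essentially the paper's own: the paper also proves $SK_\theta\subset K_\theta+\CC\,\theta$ (by computing the orthocomplement $(K_\theta+\CC\,\theta)^\perp=z\theta H^2$) and then transports this through the isometric multiplication $M_g$ to get $SM\subset M+\CC\, g\theta$. Your explicit identity $(I-P_\theta)(zh)=\langle h,S^*\theta\rangle\,\theta$ is correct and is just a sharper form of the same step. Your additional verification that the defect is exactly $1$ (i.e.\ that $gK_\theta$ with $\theta$ inner cannot be $S$-invariant, via Beurling plus the uniqueness of the extremal function $g$) is something the paper silently omits, and the idea is sound, although the phrase ``forcing $g$ to be inner'' is really ``forcing $g$ to be a unimodular multiple of $\psi$ and hence $K_\theta=H^2$'', which contradicts $\theta$ being inner.

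The genuine gap is in the codimension half of the half-space claim. The step ``$M\subset gH^2$ whose codimension is already infinite when $g$ is non-constant'' is false: if $g$ is a non-constant \emph{outer} function (and the extremal function of a nearly invariant subspace is very often outer), then $gH^2$ is dense in $H^2$, so the inclusion $M\subset gH^2$ gives no lower bound whatsoever on $\mathop{\rm codim} M$. The isometry $M_g:K_\theta\to M$ preserves dimensions but says nothing directly about codimensions, and the fallback ``$g^{-1}(M^\perp)$-type argument'' is not an argument. Note that near $S^*$-invariance alone does not force infinite codimension: $bH^2$ for a single Blaschke factor $b$ with $b(0)\neq 0$ is nearly $S^*$-invariant of codimension $1$ (its Hitt--Sarason representation has $K=H^2$, not $K_\theta$), so any correct proof must genuinely use that $K$ here is a proper model space $K_\theta$ with $\theta$ non-rational. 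To be fair, the paper itself dismisses this point with ``follows straightforwardly since $\theta$ is not rational'', so you are not worse off than the authors; but the specific justification you offer would not survive scrutiny and needs to be replaced (for instance by an argument identifying enough of $M^\perp$, or by invoking the finer structure of $g$ from Sarason's theorem), rather than merely tightened.
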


\begin{proof}
First, we claim that $S K_{\theta}\subset K_{\theta} +\mathbb{C}\,  \theta$. Indeed, the orthocomplement is given by
$$
(K_{\theta} +\mathbb{C}\,  \theta)^{\perp}=\theta H^2 \cap (\mathbb{C}\,  \theta)^{\perp}=z \theta H^2;
$$
and $\langle z \theta \, h\;, z f\rangle =0$ for any $h\in H^2$ and $f\in K_{\theta}$. Hence $z\, \theta H^2\subset (zK_{\theta})^{\perp}$, as claimed.

On the other hand, since the multiplication operator $M_g$ is everywhere defined and isometric from $K_{\theta}$ into $H^2$, one has $SM\subset M + \mathbb{C}\, g\,\theta$. This shows that $M$ is almost-invariant with defect 1.
For the last statement, note that the fact that $M$ is   a half-space follows straightforwardly since $\theta$ is not rational. This concludes the proof.
\end{proof}

Our next result will state that the orthocomplement of certain nearly invariant subspaces for $S^*$ are also almost-invariant for $S$ of defect 1. Before stating it, we need the following easy lemma.

\begin{lema} \label{lema1}
Let $\psi$ and $\theta$ be non-constant inner functions. Then  $(\psi K_\theta)^{\perp}= \theta\, \psi\,  H^2 \oplus K_{\psi} $.
\end{lema}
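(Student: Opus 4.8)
The plan is to decompose $H^2$ as an orthogonal sum in which $\psi K_\theta$ appears as one summand, and then read off its orthocomplement.

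\medskip

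First I would use the standard orthogonal decomposition associated with the inner function $\psi$: since $M_\psi$ is an isometry on $H^2$ with range $\psi H^2$, we have
$$
H^2 = K_\psi \oplus \psi H^2.
$$
Next I would apply $M_\psi$ to the analogous decomposition $H^2 = K_\theta \oplus \theta H^2$ for $\theta$. Because $M_\psi$ is an isometry, it preserves orthogonality, so $\psi H^2 = \psi K_\theta \oplus \psi\theta H^2$, and this sum is orthogonal \emph{within} $\psi H^2$, hence within $H^2$. Substituting into the first decomposition gives
$$
H^2 = K_\psi \oplus \psi K_\theta \oplus \theta\,\psi\, H^2,
$$
a genuine orthogonal direct sum of three pieces. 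Taking the orthocomplement of the middle summand immediately yields $(\psi K_\theta)^\perp = K_\psi \oplus \theta\,\psi\,H^2$, which is the claim (up to the harmless reordering of the two summands).

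\medskip

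The only points that need a word of justification, rather than being a ``routine calculation'', are: (i) that $M_\psi$ really is an isometry on $H^2$ — this is standard, since $|\psi|=1$ a.e.\ on $\TT$ so $\|\psi f\|_2 = \|f\|_2$; and (ii) that applying an isometry to an orthogonal decomposition of a subspace produces an orthogonal decomposition of the image subspace, which is a general Hilbert space fact. I do not anticipate a real obstacle here; the lemma is genuinely ``easy'' as the authors say, and the content is simply organizing the two Beurling-type decompositions for $\psi$ and $\theta$ so that they nest correctly. One could alternatively verify the identity by a direct computation with inner products, checking that every element of $\theta\psi H^2 \oplus K_\psi$ is orthogonal to every element of $\psi K_\theta$ and that the two spaces together span $H^2$, but the decomposition argument above is cleaner and makes the orthogonality transparent.
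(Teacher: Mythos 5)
Your proof is correct. It takes a slightly different route from the paper's: you build the three-fold orthogonal decomposition $H^2 = K_\psi \oplus \psi K_\theta \oplus \theta\psi H^2$ inside $H^2$, by pushing the Beurling decomposition $H^2 = K_\theta \oplus \theta H^2$ forward under the isometry $M_\psi$ and nesting it inside $H^2 = K_\psi \oplus \psi H^2$, and then you read off the orthocomplement of the middle summand. The paper instead works in $L^2(\TT)$: it writes the condition $\langle f, \psi k\rangle = 0$ for all $k \in K_\theta$ as $f\overline{\psi} \in \theta H^2 \oplus \overline{H^2_0}$ (the orthocomplement of $K_\theta$ in $L^2(\TT)$), multiplies back by $\psi$, and uses the identity $H^2 \cap \psi\,\overline{H^2_0} = K_\psi$ to land in $\theta\psi H^2 \oplus K_\psi$. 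The two arguments are of comparable length and rest on the same underlying facts (unimodularity of inner functions on $\TT$ and the Beurling decomposition); yours makes the orthogonality of the three pieces completely transparent and stays inside $H^2$, while the paper's chain of equivalences is essentially the ``direct computation with inner products'' you mention as an alternative, and has the side benefit of exhibiting the characterization $K_\psi = H^2 \cap \psi\,\overline{H^2_0}$ explicitly. Either way the lemma is established.
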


\begin{proof}
Let $f\in H^2$. Then
\[
\langle f,\, \psi  k\rangle =0    \hbox{ for all } k \in K_\theta
 \iff    f\overline{\psi} \in \theta\, H^2 \oplus \overline{H^2_0}
\iff   f \in \theta\, \psi\, H^2 \oplus K_{\psi},
\]
where the last statement follows since $f\in H^2\cap \psi\, \overline{H^2_0}$ if and only if $f\in K_{\psi}$. This concludes the proof.
\end{proof}

With Lemma \ref{lema1} in hand, we deduce the following result.

\begin{proposition}
Let  $\psi$ and $\theta$ be non-constant inner functions. Then $(\psi K_{\theta})^{\perp}$ is an
 almost-invariant space of defect 1. Moreover, if  $\psi$ is not rational (finite Blaschke product); or if $\psi$ is rational but $\theta$ is not a rational inner function, then $(\psi K_{\theta})^{\perp}$ is an almost-invariant half-space of defect 1.
\end{proposition}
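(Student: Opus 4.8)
The plan is to read off $(\psi K_\theta)^\perp$ from Lemma~\ref{lema1} and then repeat, almost verbatim, the argument used in the proof of Proposition~\ref{prop1}. By Lemma~\ref{lema1} one has $(\psi K_\theta)^\perp=\theta\psi H^2\oplus K_\psi$. Since multiplication by $z$ leaves $\theta\psi H^2$ invariant, and since the computation in the proof of Proposition~\ref{prop1}, applied with $\psi$ in place of $\theta$, gives $SK_\psi\subseteq K_\psi+\mathbb{C}\,\psi$, I would conclude
$$
S\bigl((\psi K_\theta)^\perp\bigr)\subseteq\bigl(\theta\psi H^2\oplus K_\psi\bigr)+\mathbb{C}\,\psi=(\psi K_\theta)^\perp+\mathbb{C}\,\psi,
$$
so that $(\psi K_\theta)^\perp$ is almost-invariant with defect at most $1$.

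The substantive point is that the defect is exactly $1$, that is, that $(\psi K_\theta)^\perp$ is not $S$-invariant; equivalently, that $\psi K_\theta$ is not $S^*$-invariant. Here I would argue by contradiction. Suppose $S^*(\psi K_\theta)\subseteq\psi K_\theta$, and let $k_0=1-\overline{\theta(0)}\,\theta$ be the reproducing kernel of $K_\theta$ at $0$, noting that $k_0(0)=1-|\theta(0)|^2>0$ because $\theta$ is non-constant. Then $S^*(\psi k_0)=\psi\widetilde k$ for some $\widetilde k\in K_\theta$, and multiplying out the identity $z\,S^*(\psi k_0)=\psi k_0-\psi(0)k_0(0)$ gives
$$
\psi\,\bigl(k_0-z\widetilde k\bigr)=\psi(0)\,k_0(0).
$$
Dividing by $\psi$ on $\TT$ yields $k_0-z\widetilde k=\psi(0)k_0(0)\,\overline\psi$ a.e.; the left-hand side lies in $H^2$ and the right-hand side in $\overline{H^2}$, and since $H^2\cap\overline{H^2}=\mathbb{C}$ both are constant. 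If $\psi(0)k_0(0)\neq0$ this forces $\overline\psi$, hence $\psi$, to be constant, contrary to hypothesis; if $\psi(0)k_0(0)=0$ then $\psi(0)=0$ (because $k_0(0)>0$), and the displayed identity reduces to $k_0=z\widetilde k$, whence $k_0(0)=0$, again a contradiction. Hence $(\psi K_\theta)^\perp$ is not $S$-invariant and its defect equals $1$.

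Finally, the half-space assertion is just bookkeeping on the decomposition of Lemma~\ref{lema1}. On one hand $\theta\psi H^2\subseteq(\psi K_\theta)^\perp$ is infinite-dimensional, so $(\psi K_\theta)^\perp$ always has infinite dimension. On the other hand its codimension in $H^2$ equals $\dim(\psi K_\theta)=\dim K_\theta$, the last equality because $M_\psi$ is isometric on $H^2$ ($\psi$ being inner); and $\dim K_\theta$ is infinite precisely when $\theta$ is not a finite Blaschke product, in which case $(\psi K_\theta)^\perp$ has infinite dimension and infinite codimension, i.e.\ is a half-space. I expect the middle paragraph to be the only real obstacle: ruling out $S$-invariance is the sole step where one genuinely needs that $\theta$ and $\psi$ are non-constant inner functions, whereas the defect bound and the dimension count are routine.
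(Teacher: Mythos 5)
Your argument for almost-invariance is exactly the paper's: combine Lemma~\ref{lema1} with the inclusion $SK_\psi\subset K_\psi+\CC\,\psi$ from the proof of Proposition~\ref{prop1}, so that $S\bigl((\psi K_\theta)^\perp\bigr)\subset(\psi K_\theta)^\perp+\CC\,\psi$. Your middle paragraph, showing that the defect is exactly $1$ (i.e.\ that $\psi K_\theta$ is never $S^*$-invariant when $\psi$ and $\theta$ are non-constant inner), is correct and is a genuine addition: the paper's one-line proof is silent on this point and, taken literally, only establishes defect at most $1$.

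The one place where you and the paper part company is the half-space claim, and there your codimension count exposes a problem with the statement rather than with your proof. You correctly observe that $(\psi K_\theta)^\perp$ always has infinite dimension (it contains $\theta\psi H^2$) and has codimension $\dim K_\theta$ in $H^2$, so it is a half-space precisely when $\theta$ is not a finite Blaschke product. This proves the second alternative of the proposition but not the first: if $\psi$ is irrational while $\theta$ is rational --- say $\theta(z)=z$ and $\psi$ a singular inner function --- then $K_\theta=\CC$, so $\psi K_\theta=\CC\,\psi$ and $(\psi K_\theta)^\perp$ has codimension $1$, hence is not a half-space. The paper's proof merely asserts that ``the hypotheses of the last statement ensure that the space has infinite dimension and infinite codimension,'' which fails in that case. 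So there is no gap in your argument; the discrepancy lies in the proposition's hypotheses, whose correct form (as your computation shows) is simply that $\theta$ is not a rational inner function, irrespective of $\psi$.
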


\begin{proof}
The statement just follows bearing in mind that $S K_{\psi}\subset K_{\psi} +\mathbb{C}\,  \psi$ for any inner $\psi$ and the identity $(\psi K_\theta)^{\perp}= \theta\, \psi\,  H^2 \oplus K_{\psi} $ proved in Lemma \ref{lema1}. Note that the hypotheses of the last statement ensure that the space has infinite dimension and infinite codimension (so it is a half-space).
\end{proof}

In this regard, we shall show that not every almost-invariant half-space $M$ for $S$ is, indeed, a nearly invariant subspace for $S^*$. In other words, the converse of Proposition~\ref{prop1} does not hold.

\begin{proposition}
There exist almost-invariant half-spaces for $S$ which are not nearly invariant for $S^*$. More precisely, if $\theta$ is not a rational inner function and $\theta(0)=0$, then $(\theta K_{\theta})^{\perp}$ is an almost-invariant half-space of defect 1, but not nearly invariant for $S^*$.
\end{proposition}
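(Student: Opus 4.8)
The plan is to get the first assertion almost for free and then produce an explicit vector witnessing the failure of near invariance. Set $M=(\theta K_{\theta})^{\perp}$. Since $\theta$ is not a finite Blaschke product, the preceding proposition, applied with $\psi=\theta$, already shows that $M$ is an almost-invariant half-space of defect $1$; and Lemma~\ref{lema1} with $\psi=\theta$ gives the concrete description $M=\theta^{2}H^{2}\oplus K_{\theta}$, whence $M^{\perp}=\theta K_{\theta}$ (a closed subspace, as $M_{\theta}$ is isometric). So everything reduces to exhibiting an $f\in M$ with $f(0)=0$ but $S^{*}f\notin M$.

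The function I would take is $f=\theta^{2}$. Since $\theta(0)=0$, the function $\theta_{1}:=S^{*}\theta=\theta/z$ is again inner and $\theta=z\theta_{1}$; in particular $f=\theta^{2}=z^{2}\theta_{1}^{2}\in\theta^{2}H^{2}\subset M$ and $f(0)=\theta(0)^{2}=0$, so $f$ is an admissible test vector. Computing,
\[
S^{*}f(z)=\frac{\theta(z)^{2}-\theta(0)^{2}}{z}=\frac{z^{2}\theta_{1}(z)^{2}}{z}=z\,\theta_{1}(z)^{2}=\theta(z)\,\theta_{1}(z).
\]
It then remains to check that $\theta\theta_{1}\notin M$. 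I would first verify that $\theta_{1}\in K_{\theta}$: indeed $\theta_{1}\in H^{2}$, and for every $h\in H^{2}$ one has $\langle\theta_{1},\theta h\rangle=\langle\theta_{1},z\theta_{1}h\rangle=\langle 1,zh\rangle=\overline{(zh)(0)}=0$, so $\theta_{1}\perp\theta H^{2}$. Hence $S^{*}f=\theta\theta_{1}\in\theta K_{\theta}=M^{\perp}$, and since $\theta\theta_{1}$ is inner it is nonzero; therefore $S^{*}f\in M^{\perp}\setminus\{0\}$, so $S^{*}f\notin M$. This produces the desired $f\in M$ with $f(0)=0$ and $S^{*}f\notin M$, so $M$ is not nearly invariant for $S^{*}$.

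I do not expect a genuine obstacle here; the one thing worth isolating is the mechanism behind the failure. Because $\theta(0)=0$, the summand $\theta^{2}H^{2}$ of $M$ sits inside $\{g\in H^{2}:g(0)=0\}$, so the near-invariance test applies to its elements, yet $S^{*}$ maps $\theta^{2}$ out of $M$ and straight into the orthogonal summand $\theta K_{\theta}$. The supporting facts, namely the identity $M^{\perp}=\theta K_{\theta}$ (Lemma~\ref{lema1}) and the membership $\theta_{1}\in K_{\theta}$, are routine; and one could equally conclude by pairing, $\langle S^{*}f,\theta\theta_{1}\rangle=\|z\theta_{1}^{2}\|^{2}=1\neq 0$.
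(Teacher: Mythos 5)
Your proof is correct and follows essentially the same route as the paper: both use Lemma~\ref{lema1} with $\psi=\theta$ and test near invariance on the same function $f=\theta^{2}$. The only (cosmetic) difference is in the finish: the paper derives a contradiction from the representation $\theta\theta_{1}=\theta^{2}h+k$ with $k\in K_\theta\cap\theta H^2=\{0\}$ forcing $h=1/z\notin H^2$, whereas you observe directly that $S^{*}f=\theta\theta_{1}$ lies in the nonzero part of $M^{\perp}=\theta K_{\theta}$ -- an equally valid and slightly more transparent conclusion.
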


\begin{proof}
Let $\theta$ be an inner function, not rational, and satifying $\theta (0)=0$.  Let $f=\theta^2$. It follows that $f\in (\theta K_\theta)^\perp$, and $f(0)=0$. Assume on the contrary that $(\theta K_\theta)^\perp$ is nearly invariant for $S^*$. Then $z\mapsto \frac{f(z)}{z}$ belongs to  $(\theta K_\theta)^\perp=\theta^2H^2\oplus K_\theta$, by Lemma~\ref{lema1}.  Since $\theta(0)=0$, there exists an inner function $\theta_1$ such that $\theta(z)=z\theta_1(z)$, and then
\[\frac{\theta^2(z)}{z}=\theta(z)\theta_1(z)=\theta^2(z)h(z)+k(z),\]
for some $k\in K_\theta$ and $h\in H^2$. Since $k\in K_\theta \cap \theta H^2$, $k(z)=0$ and then $h(z)=\frac{1}{z}$, a contradiction.

\end{proof}

\section{Classification of nearly invariant subspaces}\label{Sec3}

In order to describe the almost-invariant subspaces for $S$, let us introduce the definition of nearly invariant subspaces with defect $m$ for $S^*$ as a generalization of nearly invariant subspaces.

\begin{definicion}
A closed subspace $M \subset H^2$ is said to be nearly $S^*$-invariant with
defect $m$ if and only if there is an $m$-dimensional subspace $F$ (which
may be taken to be orthogonal to $M$) such that if
$f \in M$, $f(0)=0$ then $S^*f \in M \oplus F$.
\smallskip
We say that $M$ is $S^*$ almost-invariant with defect $m$ if and
only if $S^*M \subset M \oplus F$, where $\dim F=m$.
\end{definicion}

Clearly $S^*$ almost-invariance implies near $S^*$-invariance (with the
same defect). The work of Hitt \cite{hitt} shows a connection
between the two concepts
in the case of $m=0$, as a nearly $S^*$ invariant
subspace has the form $M= fK$, where $K$ is an $S^*$-invariant
subspace and $f \in H^2$ satisfies $\|fk\|=\|k\|$ for all $k \in K$.
See also \cite{CCP} for a vectorial version.

%Examples of near-invariance include $M=\theta H^2$, where $\theta$ is inner
%and $\theta(0) \ne 0$, as well as certain $f K_\theta$, where $K_\theta= (\theta H^2)^\perp$. We remark also that kernels of Toeplitz operators
%are nearly $S^*$-invariant.
%It is also worth noting that
%if $\theta(0)=0$, then taking $F$ to be spanned by $S^*\theta$, we
%have near invariance of $\theta H^2$, with defect $1$.

We shall generalize Hitt's algorithm to obtain a representation of
nearly $S^*$-invariant subspaces with defect $m$ (finite), as follows.\\

Consider a subspace $M$ that is nearly $S^*$-invariant with defect $1$,
so that $F=\langle e_1 \rangle$, say, where $\|e_1\|=1$.

Suppose first that not all functions in $M$ vanish at $0$, and
  let $f_0 \in M$ denote
the normalized reproducing kernel at $0$, so that $f_0=k_0/\|k_0\|$, where $\langle f, k_0 \rangle = f(0)$
for all $f \in M$. Clearly $k_0(0) \ne 0$, so $f_0(0) \ne 0$.

For each $f \in M$ we may write $f=\alpha_0 f_0 + f_1$, where $\alpha_0 \in \CC$
and $f_1(0)=0$. So $S^*f_1= g_1 + \beta_1 e_1$ where $g_1 \in M$
and $\beta_1 \in \CC$.

Thus
\begin{equation}\label{eq:decomp}
f(z)=\alpha_0 f_0(z) + z g_1(z) + \beta_1 z e_1(z), \qquad (z\in \mathbb{D}),
\end{equation}
and
\[
\|f\|^2 = |\alpha_0|^2 + \|f_1\|^2 =
 |\alpha_0|^2 + \|g_1\|^2 + |\beta_1|^2.
 \]

We may now iterate this, starting with $g_1$, to obtain
\[
f(z)=(\alpha_0+ \alpha_1 z + \ldots + \alpha_{n-1}z^{n-1})  f_0(z)
+ z^n g_n(z) + (\beta_1 z + \ldots + \beta_n z^n) e_1(z),
\]
where
\[
\|f\|^2 = \sum_{k=0}^{n-1} |\alpha_k|^2 + \|g_n\|^2 + \sum_{k=1}^n |\beta_k|^2.
\]
Now in fact $\|g_n\| \to 0$ as $n \to \infty$. This can be seen
on writing $g_n = P_1 S^*  P_2 g_{n-1}$, where $P_1$ is the orthogonal
projection with kernel $\langle e_1 \rangle$ and $P_2$ the orthogonal
projection with kernel $\langle f_0 \rangle$.
Now the backward shift is a $C_{0.}$ operator, so that $\|S^{*n} h \| \to 0$
for all $h \in H^2$. It follows by applying \cite[Lemma~3.3]{BT} to the adjoint
operators that first $P_1 S^*$ is $C_{0.}$ (with finite defect), and then, on
applying the same lemma again,
that $P_2P_1 S^*$ is also $C_{0.}$, and hence $\|g_n\| \to 0$.

Consequently, we may write
\[
f(z) = \left( \sum_{k=0}^\infty \alpha_k z^k \right) f_0 + \left( \sum_{k=1}^\infty \beta_k z^k \right) e_1, \qquad (z\in \mathbb{D}),
\]
where the sums converge in $H^2$ norm and indeed
\begin{equation}\label{eq:normeq}
\|f\|^2 = \sum_{k=0}^{\infty} |\alpha_k|^2   + \sum_{k=1}^\infty |\beta_k|^2.
\end{equation}
We may alternatively express this as saying that $f \in M$ if and only if
\[
f(z) = k_0(z) f_0(z) + z k_1(z) e_1(z),
\]
where $(k_0,k_1)$ lies in a subspace $K \subset H^2 \times H^2$. Now, recall that $H^2 \times H^2$ can be identified with $H^2(\mathbb{D};\, \CC^2)$, that is, the space consisting of all analytic functions $F:\mathbb{D}\to \mathbb{C}^2$ such that
$$
\|F\|=\left ( \sup_{0<r<1} \frac{1}{2\pi} \int_0^{2\pi} \|F(re^{i\theta})\|_{\mathbb{C}^2}^2 \; d\theta \right )^{1/2}<\infty.
$$

By virtue of \eqref{eq:normeq} we see that $K \subset H^2(\mathbb{D};\, \CC^2)$ is indeed closed. Moreover,
$K$ is invariant under the backward shift $S^* \oplus S^*$, since
in the algorithm above,
\[
g_1= S^* k_0 f_0 + z S^* k_1 e_1 \in M.
\]
Conversely, if
\[
M = \{ k_0 f_0 + z k_1 e_1: (k_0,k_1) \in K\},
\]
is a closed subpace of $H^2$,
where $K$ is invariant under the backward shift, then $M$
is nearly $S^*$-invariant with defect $1$.\\

If all the functions in $M$ vanish at $0$, then there is no nontrivial
reproducing kernel at $0$, but the calculations are simpler,
as we may replace \eqref{eq:decomp} with
\[
f(z) = z (g_1(z) + \beta_1 e_1(z)), \qquad (z\in \mathbb{D}),
\]
with $g_1 \in M$ and $\beta_1 \in \CC$,
where $\|g_1\|^2 + | \beta_1|^2 = \|f\|^2$.
The algorithm is then iterated to yield
\[
f(z)= \beta_1 z e_1(z) + \beta_2 z^2 e_1(z) + \ldots.
\]

For general finite defect $m$ the analogous calculations produce
the following result.

\begin{theorem}\label{thm:maino}
Let $M$ be a
closed subspace that is nearly $S^*$-invariant with
defect $m$. Then:\\
(i) in the case where there are functions in $M$ that do not vanish at $0$,
\[
M= \{f: f(z)=k_0(z) f_0(z) + z\sum_{j=1}^m k_j(z) e_j(z): (k_0,\ldots, k_m) \in K\},
\]
where $f_0$ is the normalized reproducing kernel for $M$ at $0$, $\{e_1,\ldots,e_m\}$ is any orthonormal basis for $F$, and
$K$ is a closed $S^* \oplus \ldots \oplus S^*$ invariant subspace of the vector-valued Hardy space  $H^2(\mathbb{D};\, \CC^{m+1})$, and
$\|f\|^2 = \sum_{j=0}^m \|k_j\|^2$.\\
(ii) In the case where all functions in $M$ vanish at $0$,
\[
M= \{f: f(z)=  z\sum_{j=1}^m k_j(z) e_j(z): (k_1,\ldots, k_m) \in K\},
\]
with the same notation as in (i), except that $K$ is now a closed $S^* \oplus \ldots \oplus S^*$ invariant subspace of the vector-valued Hardy space $H^2(\mathbb{D};\,\CC^{m})$, and
$\|f\|^2 = \sum_{j=1}^m \|k_j\|^2$.\\
Conversely, if a closed subspace $M \subset H^2$ has a representation as in (i) or (ii), then it is a nearly $S^*$-invariant subspace
of defect $m$.
\end{theorem}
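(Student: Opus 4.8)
The plan is to follow the Hitt-type algorithm already carried out in detail above for $m=1$, and to carry out the bookkeeping for general finite $m$; the forward direction of the theorem is essentially the iteration we have displayed, so the real content is (a) making the iteration work with an $m$-dimensional defect space $F = \langle e_1, \ldots, e_m\rangle$, (b) checking the convergence $\|g_n\| \to 0$, and (c) proving the converse, namely that every subspace of the stated form is nearly $S^*$-invariant with defect $m$.

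For the forward direction, case (i): I would fix an orthonormal basis $\{e_1, \ldots, e_m\}$ for $F$ and let $f_0$ be the normalized reproducing kernel of $M$ at $0$. Given $f \in M$, write $f = \alpha_0 f_0 + f_1$ with $f_1(0) = 0$, so that $S^* f_1 = g_1 + \sum_{j=1}^m \beta_{1,j} e_j$ with $g_1 \in M$; by orthogonality of $F$ to $M$ one gets $\|f\|^2 = |\alpha_0|^2 + \|g_1\|^2 + \sum_j |\beta_{1,j}|^2$. Iterating starting from $g_1$ yields, after $n$ steps,
\[
f(z) = \Bigl(\sum_{k=0}^{n-1}\alpha_k z^k\Bigr) f_0(z) + z^n g_n(z) + \sum_{j=1}^m \Bigl(\sum_{k=1}^n \beta_{k,j} z^k\Bigr) e_j(z),
\]
with the corresponding Pythagorean identity for $\|f\|^2$. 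The convergence $\|g_n\| \to 0$ is handled exactly as in the $m=1$ case: write $g_n = P_1 S^* P_2 g_{n-1}$ where $P_2$ projects off $\langle f_0\rangle$ and $P_1$ projects off $F$, and apply \cite[Lemma~3.3]{BT} (to the adjoints) $m+1$ times — once to peel off $\langle f_0 \rangle$ and once for each $e_j$ — to conclude that $P_1 S^*$, and then $P_2 P_1 S^*$, is $C_{0\cdot}$ with finite defect, so $\|g_n\|\to 0$. Passing to the limit gives $f(z) = k_0(z) f_0(z) + z\sum_{j=1}^m k_j(z) e_j(z)$ with $k_0(z) = \sum \alpha_k z^k$, $k_j(z) = \sum_k \beta_{k,j} z^k$, and $\|f\|^2 = \sum_{j=0}^m \|k_j\|^2$; this last identity shows that the set $K$ of admissible tuples $(k_0, \ldots, k_m)$ is a \emph{closed} subspace of $H^2(\DD;\CC^{m+1})$. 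Invariance of $K$ under $S^* \oplus \cdots \oplus S^*$ follows because $g_1 \in M$ corresponds precisely to the shifted tuple, exactly as displayed in the $m=1$ argument. Case (ii), where every function in $M$ vanishes at $0$, is the same computation with $f_0$ and the $k_0$-coordinate simply deleted; the reproducing kernel plays no role and the algorithm starts from $f(z) = z(g_1(z) + \sum_j \beta_{1,j} e_j(z))$.

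For the converse, suppose $M$ has the form in (i). Take $f \in M$ with $f(0) = 0$, corresponding to $(k_0, \ldots, k_m) \in K$. Evaluating the representation at $0$ gives $0 = f(0) = k_0(0) f_0(0)$, and since $f_0(0) \ne 0$ we get $k_0(0) = 0$, hence $k_0 = z\,\widetilde{k_0}$ with $\widetilde{k_0} = S^* k_0 \in H^2$. Then
\[
S^* f(z) = \frac{f(z)}{z} = \widetilde{k_0}(z) f_0(z) + \sum_{j=1}^m k_j(z) e_j(z)
= \widetilde{k_0}(z) f_0(z) + z\sum_{j=1}^m (S^* k_j)(z) e_j(z) + \sum_{j=1}^m k_j(0) e_j(z).
\]
Since $K$ is invariant under $S^*\oplus\cdots\oplus S^*$, the tuple $(\widetilde{k_0}, S^*k_1, \ldots, S^*k_m) = (S^*k_0, S^*k_1,\ldots,S^*k_m)$ lies in $K$, so the first two groups of terms form an element of $M$; the remaining term $\sum_j k_j(0) e_j$ lies in $F$. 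Hence $S^* f \in M \oplus F$, which is exactly near $S^*$-invariance with defect (at most) $m$. The argument for (ii) is identical with the $\widetilde{k_0} f_0$ term absent. One should also note $M$ is closed: this is immediate from the hypothesis (it is part of the statement "closed subspace $M \subset H^2$ has a representation as in (i) or (ii)"), but if one wishes to derive it, it follows from the norm identity $\|f\|^2 = \sum_j \|k_j\|^2$, which makes $f \mapsto (k_0,\ldots,k_m)$ an isometry of $M$ onto the closed subspace $K$.

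The main obstacle I anticipate is not any single hard estimate but the careful handling of the convergence step when $m > 1$: one must verify that \cite[Lemma~3.3]{BT} can indeed be applied iteratively $m+1$ times, each application reducing the defect by one while preserving the $C_{0\cdot}$ property and keeping the defect finite, so that the telescoping decomposition of $f$ genuinely converges in $H^2$ and the limiting series for $k_0$ and the $k_j$ define honest $H^2$ functions. Everything else — the Pythagorean identities, the identification of $K$ as a closed shift-invariant subspace, and the converse computation — is routine bookkeeping parallel to the $m=1$ case spelled out above.
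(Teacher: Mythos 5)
Your proposal is correct and follows essentially the same route as the paper: the paper carries out the Hitt-type algorithm in detail only for $m=1$ (including the same use of \cite[Lemma~3.3]{BT} on the adjoints to get $\|g_n\|\to 0$) and then states that "the analogous calculations" give the general case, which is precisely the bookkeeping you perform. If anything, you are slightly more complete than the paper, since you write out the converse verification explicitly, which the paper only asserts.
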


\begin{remark}\label{rem:33}
If $L$ is a non-trivial invariant subspace for $S^*$ and $x_0 \in H^2 \setminus L$, then it is clear that
the subspace $L \oplus \CC x_0$ is nearly invariant with defect 1. However, not all such subspaces
occur in this way, since
 the example $M=\theta H^2$, where $\theta(0)=0$, discussed
above, occurs as case (ii) with  $m=1$, $K=  H^2$, and $e_1=S^*\theta$. However $M$ contains no
nontrivial invariant subspace for $S^*$.
\end{remark}

Note that $K^\perp$ can be described using the Lax--Beurling theorem (e.g.
\cite[Thm 3.1.7]{LOLS}), since
it is invariant under $S \oplus \cdots \oplus S$. Indeed
$K^\perp= \Theta H^2(\mathbb{D};\,\CC^r)$, where $0 \le r \le m+1$ and
$\Theta$ is inner in the matrix-valued version of $H^{\infty}$, that is $\Theta \in H^\infty(\mathbb{D};\, {\mathcal L}(\CC^r,\CC^{m+1}))$ is an isometry
almost everywhere on the unit circle.\\

\begin{corolario}
A closed subspace $M$ is  an
almost-invariant subspace for $S^*$ with defect $m$ if and only if it satisfies the conditions of
Theorem~\ref{thm:maino}, together with
the extra condition that $S^* f_0 \in M \oplus F$ in case (i), while case (ii) is unchanged.
\end{corolario}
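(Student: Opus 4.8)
The plan is to exploit the fact that $S^*$ almost-invariance with defect $m$ differs from near $S^*$-invariance with defect $m$ only in one explicitly controllable place, and then to feed this through Theorem~\ref{thm:maino}, so that no genuinely new computation is needed.

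First I would record the elementary splitting. Put $M_0=\{f\in M: f(0)=0\}$. In case (i) one has $M=\CC f_0\oplus M_0$, since every $f\in M$ decomposes as $f=(f(0)/f_0(0))f_0+f_1$ with $f_1\in M_0$; in case (ii) one simply has $M=M_0$. Fix a defect space $F$ (which, as in the definition, may be taken orthogonal to $M$ with $\dim F=m$) and an orthonormal basis $e_1,\dots,e_m$ of it. Near $S^*$-invariance with this $F$ says precisely $S^*M_0\subseteq M\oplus F$, whereas $S^*$ almost-invariance with the same $F$ says $S^*M\subseteq M\oplus F$. By linearity of $S^*$ together with the splitting above, in case (ii) these two statements are literally identical, and in case (i) the second is equivalent to the first together with the single extra requirement $S^*f_0\in M\oplus F$. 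Combined with Theorem~\ref{thm:maino} this is the whole content of the corollary.

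For the forward implication, suppose $S^*M\subseteq M\oplus F$ with $\dim F=m$ and $F\perp M$. A fortiori $S^*M_0\subseteq M\oplus F$, so $M$ is nearly $S^*$-invariant with defect $m$; running the algorithm preceding Theorem~\ref{thm:maino} with this very $F$ produces a representation of $M$ of the form (i) or (ii) with the same $F$ and, in case (i), the same reproducing kernel $f_0$. In case (i), since $f_0\in M$ we get $S^*f_0\in S^*M\subseteq M\oplus F$ for free, so the extra condition holds. For the converse implication, assume $M$ has a representation as in (i) (resp.\ (ii)); by the converse half of Theorem~\ref{thm:maino}, $M$ is nearly $S^*$-invariant with $F=\span\{e_1,\dots,e_m\}$ serving as a defect space, i.e.\ $S^*M_0\subseteq M\oplus F$. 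In case (ii) this already gives $S^*M=S^*M_0\subseteq M\oplus F$. In case (i), combine $S^*M_0\subseteq M\oplus F$ with the hypothesis $S^*f_0\in M\oplus F$ and the splitting $M=\CC f_0\oplus M_0$ to conclude $S^*M\subseteq M\oplus F$; hence $M$ is $S^*$ almost-invariant with defect $m$.

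The only point requiring care is bookkeeping of the defect space: one must ensure that the $F$ appearing in the near-invariance representation supplied by Theorem~\ref{thm:maino} is the same $F$ used in the extra condition $S^*f_0\in M\oplus F$. This is automatic, because the algorithm that produces the representation takes the defect space $F$ (and its basis $e_1,\dots,e_m$) as input and returns a $K$ adapted to it, while the converse half of Theorem~\ref{thm:maino} certifies that $\span\{e_1,\dots,e_m\}$ is a valid defect space for the near-invariance of $M$. With this matching in hand there is no further obstacle.
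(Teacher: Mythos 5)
Your proposal is correct and is exactly the argument the corollary rests on (the paper states it without proof as an immediate consequence of Theorem~\ref{thm:maino}): the orthogonal splitting $M=\CC f_0\oplus M_0$ with $M_0=\{f\in M: f(0)=0\}$ reduces $S^*M\subset M\oplus F$ to $S^*M_0\subset M\oplus F$ plus the single condition $S^*f_0\in M\oplus F$, and case (ii) is the degenerate instance $M=M_0$. Your closing remark about keeping the same defect space $F$ throughout is the right point to flag, and your handling of it is sound.
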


\begin{remark}
Note also that $S^* M \subset M \oplus F$ is equivalent to the condition
that
\[
(S(M \oplus F))^\perp \subset M^\perp=(M \oplus F)^\perp\oplus G , \]
where $G= F \ominus M^\perp$ and $\dim G = \dim F$; this gives an expression
for $S$ almost-invariant subspaces too (see also \cite{APTT09}).

Note that it is impossible for a nontrivial subspace $M$ to satisfy $SM= M \oplus F$ with $F$ finite-dimensional,
since this would imply that $M \subset SM$, and so $M \subset S^n M$ for all $n \ge 1$, and hence
$M=\{0\}$.
\end{remark}

\begin{remark}
We expect a version of Theorem \ref{thm:maino} to hold in the case of the shift on the vector-valued  Hardy space $H^2(\mathbb{D};\; \CC^m)$,
derived by methods similar to those of \cite[Thm. 4.4]{CCP}. We leave this as a subject for
further investigation.
\end{remark}

\section{Almost invariant subspaces for the bilateral shift }\label{sec:4}
Denote by $U$ the multiplication by $t\mapsto e^{it}$ on $L^2(\TT)$. Such operator is called the bilateral shift, it is unitary and $U^*f(\xi)=\overline{\xi}f(\xi)$ for all $f \in L^2(\TT)$. The famous Lax--Beurling theorem provides a complete description of the closed invariant subspace $M$ by $U$, namely:\\
\begin{itemize}
\item if $UM=M$, then there exists a Borel set $\Omega\subset \TT$ such that
$M=\{f\in L^2(\TT): f(\xi)=0 \mbox{ a.e. on }\Omega\}$;
\item if $UM\subsetneq M$, then there exists $\theta\in L^\infty (\TT)$ such that $|\theta (\xi)|=1$ a.e. on $\TT$  and $M=\theta H^2$.
\end{itemize}

It follows that one can easily describe the lattice of invariant subspaces of $U^{-1}=U^*$.
Indeed, since $UM=M$ is equivalent to $U^{-1}M=M$ and since  $UM\subsetneq M$   is equivalent to
$U^{*}M^\perp \subsetneq M^\perp$, the invariant subspaces $N$ of $U^*=U^{-1}$ can be described as follows:\\
\begin{itemize}
\item if $U^* N=N$, then there exists a Borel set $\Omega\subset \TT$ such that
$M=\{f\in L^2(\TT): f(\xi)=0 \mbox{ a.e. on }\Omega\}$;
\item
if $U^*N\subsetneq N$, then there exists $\theta\in L^\infty (\TT)$ such that $|\theta (\xi)|=1$ a.e. on $\TT$  and $N=\theta \overline{H^2_0}$.
\end{itemize}

We first investigate almost-invariant subspaces for $U$ of defect $1$.
Our first observation shows that the case of the bilateral shift is drastically different from the case of the unilateral shift.

\begin{proposition}\label{prop:equality}
Let $M$ be a closed subspace of $L^2(\TT)$ such that
\begin{equation}\label{eq:defect1}
U(M)=M\oplus^\perp \CC x_0.
\end{equation}
Then $M=\theta \overline{H^2_0}$ for some
$\theta\in L^{\infty}(\TT)$ taking unimodular values on the unit circle a.e.
Conversely, if $M=\theta \overline{H^2_0}$ as above, then $U(M)=M \oplus^\perp \CC x_0$
where $x_0=\theta$.
\end{proposition}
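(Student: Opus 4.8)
The plan is to analyze the hypothesis $U(M) = M \oplus^\perp \CC x_0$ directly, first extracting structural consequences and then matching them against the Lax--Beurling description of invariant subspaces of $U$ recalled above. First I would observe that $\dim(UM \ominus M) = 1$ forces $M$ to be a proper (hence nontrivial, nonzero) subspace, and in particular $M$ cannot be all of $L^2(\TT)$. More importantly, since $U$ is unitary, $UM \supsetneq M$ implies $U^{-1}M \subsetneq M$; iterating, $U^{-n}M$ is a strictly decreasing chain of subspaces, so $M$ is \emph{not} $U$-reducing in the first ``Borel set'' sense. I would then invoke the analogue of Beurling's argument: from $UM = M \oplus^\perp \CC x_0$ we get $U^n M = M \oplus^\perp \CC x_0 \oplus^\perp \CC (U x_0) \oplus^\perp \cdots \oplus^\perp \CC(U^{n-1}x_0)$, so the vectors $\{U^n x_0\}_{n \ge 0}$ are pairwise orthogonal and of equal norm $\|x_0\|$; rescaling so $\|x_0\| = 1$, the closed span $W := \overline{\mathrm{span}}\{U^n x_0 : n \ge 0\}$ is a subspace on which $U$ acts as a unilateral shift of multiplicity one, and $M = \bigcap_n U^n M$ plus $W$ (orthogonally).

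The key step is then identifying $W$. Since $U$ restricted to $W$ is a pure unilateral shift of multiplicity one and $U$ is multiplication by $e^{it}$ on $L^2(\TT)$, the wandering vector $x_0$ must satisfy $|x_0(\xi)| = 1$ a.e.: indeed $\langle U^n x_0, x_0\rangle = \int_\TT |x_0(\xi)|^2 e^{int}\, \frac{dt}{2\pi} = 0$ for all $n \ge 1$ and equals $1$ for $n=0$ forces $|x_0|^2 \equiv 1$ a.e. Setting $\theta := x_0$, one has $|\theta| = 1$ a.e., and $W = \theta \cdot \overline{\mathrm{span}}\{e^{int} : n \ge 0\} = \theta H^2$. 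Next I would show the residual part $\bigcap_{n \ge 0} U^n M$ is zero: any $f$ in this intersection lies in $U^n M \subset U^n L^2 = L^2$ but, writing everything relative to the orthogonal decomposition $U^n M = M \oplus \CC x_0 \oplus \cdots \oplus \CC U^{n-1}x_0$, one sees $M \cap U^n M$ shrinks appropriately; more cleanly, $\bigcap_n U^n M \subseteq \bigcap_n U^n L^2(\TT)$ would need a separate argument, so instead I would argue that $M \ominus UM = \{0\}$ (since $UM \supsetneq M$) together with unitarity gives $M = \bigoplus_{n \ge 1} U^{-n}(\CC x_0) \oplus (\bigcap_n U^{-n} M)$ — wait, this needs care — so the cleanest route is: from $UM = M \oplus \CC x_0$ deduce $M = U^{-1}M \oplus \CC U^{-1}x_0$, i.e. $M \supsetneq U^{-1}M \supsetneq U^{-2} M \supsetneq \cdots$, and $M = \CC U^{-1}x_0 \oplus \CC U^{-2}x_0 \oplus \cdots \oplus \bigcap_{n} U^{-n}M$. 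The intersection $R := \bigcap_n U^{-n}M = \bigcap_n U^n M$ satisfies $UR = R$, so $R = \chi_\Omega L^2(\TT)$ for some Borel $\Omega$; but $R \subseteq M$ and $UM = M \oplus \CC x_0$ with $x_0 \perp M \supseteq R$ forces, since $UR = R \subseteq UM$, compatibility that I expect to force $R = \{0\}$ (otherwise $M$ would contain a $U$-reducing piece, contradicting that the whole of $M$ is swept out by the shift part — one checks $x_0 \perp R$ and $U^{-n}x_0 \perp R$ for all $n$, and $M = R \oplus \overline{\mathrm{span}}\{U^{-n}x_0\}$, which is consistent, so I must instead rule out $R \ne \{0\}$ using that $R = \chi_\Omega L^2$ and $\overline{\mathrm{span}}\{U^{-n}x_0\} = \theta \overline{H^2_0}$ cannot be orthogonal unless $\Omega$ has measure zero, because $\theta\overline{H^2_0}$ already has a trivial intersection-free complement behavior).

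Concretely, once $W = \theta H^2$ with $|\theta|=1$ a.e. is established, I would finish by computing $M$ from $UM = M \oplus \CC x_0 = M \oplus \CC\theta$. Note $\theta H^2 = \CC\theta \oplus \theta(zH^2) = \CC\theta \oplus U(\theta H^2)$, matching the pattern; and $\theta \overline{H^2_0}$ satisfies $U(\theta\overline{H^2_0}) = \theta\, e^{it}\overline{H^2_0} = \theta(\overline{H^2_0} \oplus \CC \cdot 1) = \theta\overline{H^2_0} \oplus \CC\theta$, which is exactly \eqref{eq:defect1} with $x_0 = \theta$ — this verifies the converse immediately. For the forward direction, $M$ and $\theta\overline{H^2_0}$ both satisfy $U(\,\cdot\,) = (\,\cdot\,) \oplus \CC\theta$ and both contain no $U$-reducing subspace, and since $L^2(\TT) = \theta H^2 \oplus \theta\overline{H^2_0}$ with the left summand being the ``shift'' part generated by $\theta$, the subspace $M$ with $UM = M \oplus \CC\theta$ must be $\theta\overline{H^2_0}$ (equivalently: $M^\perp$ is $U$-invariant of the ``$\theta H^2$'' type since $U M \supsetneq M \iff U^* M^\perp \subsetneq M^\perp$, so $M^\perp = \psi H^2$ for some unimodular $\psi$, and the defect-one condition pins $\psi = \theta$ up to the usual normalization, giving $M = (\theta H^2)^\perp = \theta\overline{H^2_0}$).

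\textbf{Main obstacle.} I expect the delicate point to be ruling out a nonzero $U$-reducing summand of $M$ — i.e., showing $\bigcap_n U^n M = \{0\}$ — and, relatedly, pinning down $\theta$ exactly rather than up to an inner factor; both are cleanly handled by passing to orthogonal complements and using $U M \supsetneq M \iff U^* M^\perp \subsetneq M^\perp$ together with the Lax--Beurling classification for $U^*$ recalled in the excerpt, which forces $M^\perp = \theta H^2$ directly, after which the defect-one count is automatic.
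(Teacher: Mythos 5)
Your proposal is correct, and the argument you ultimately settle on as ``the cleanest route'' --- applying $U^{-1}$ to get $M = U^{-1}M \oplus^\perp \CC\, U^{-1}x_0$, hence $U^{-1}M \subsetneq M$, and then invoking the Lax--Beurling classification (equivalently via $U M^\perp \subsetneq M^\perp$) to conclude $M = \theta\,\overline{H^2_0}$ --- is exactly the paper's proof, with the converse checked by the same direct computation. The lengthy preliminary detour through wandering vectors, the unimodularity of $x_0$, and the reducing part $\bigcap_n U^{-n}M$ is unnecessary once Lax--Beurling is invoked, as you yourself recognize.
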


\begin{proof}
Since $U^{-1}$ is isometric, it follows that $M=U^{-1}M\oplus^{\perp} \CC U^{-1}(x_0)$, which implies in particular that $U^{-1}(M)\subset M$. Our hypothesis implies that $U^{-1}M\neq M$, and the Lax--Beurling theorem says that there  exists
a unimodular function $\theta$ such that $M=\theta \overline{H^2_0}$.
The converse is clear.
\end{proof}

We also observe by the same argument that we cannot have $U(M)=M \oplus F$, with $\dim F > 1$,
as in the case of $S$.
\\

The second case is not that easy to deal with.  As in Proposition~\ref{prop:equality}
(where this condition is automatically satisfied) we shall add the supplementary
condition that $x_0 \in L^\infty(\TT)$.

\begin{proposition}
Let $M$ be a closed subspace of $L^2(\TT)$ such that
\begin{equation}\label{eq:defect2}
U(M)\subsetneq M\oplus^\perp \CC x_0,
\end{equation}
where $x_0 \in L^\infty(\TT)$ with $\|x_0\|_2=1$. Then
\begin{equation}\label{eq:repm}
M= \{ g+h x_0: (g,h) \in K \},
\end{equation}
where $K \subseteq L^2(\TT) \times \overline{H^2_0}$ is a closed subspace
invariant under $U \oplus P_- U$, where $P_-: L^2(\TT) \to \overline{H^2_0}$ is the
orthogonal projection.
\end{proposition}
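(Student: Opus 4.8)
The plan is to run a Hitt-type iteration adapted to the unitary operator $U$, exactly parallel to the algorithm of Section~\ref{Sec3} but with $U^{-1}$ in place of multiplication by $z$ and with $x_0$ playing the role of the defect vector $e_1$: one repeatedly peels off a copy of $U^{-1}$, collecting the successive coefficients against $x_0$ into a single function $h\in\overline{H^2_0}$, and keeps whatever does not converge to zero as a remainder $g\in L^2(\TT)$.

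Fix $f\in M$. Since $U(M)\subseteq M\oplus^\perp\CC x_0$, write $Uf=g_1+\beta_1 x_0$ with $g_1\in M$, $\beta_1\in\CC$; orthogonality gives $\|f\|^2=\|g_1\|^2+|\beta_1|^2$, and the decomposition being unique, $g_1$ and $\beta_1$ depend linearly on $f$. Applying $U^{-1}$ and repeating the construction on $g_1,g_2,\dots\in M$ produces, after $n$ steps,
\[
f=U^{-n}g_n+\Big(\sum_{k=1}^n\beta_k e^{-ikt}\Big)x_0,\qquad
\|f\|^2=\|g_n\|^2+\sum_{k=1}^n|\beta_k|^2 .
\]
Since $\sum_{k=1}^n|\beta_k|^2\le\|f\|^2$, the series $h:=\sum_{k\ge1}\beta_k e^{-ikt}$ converges in $L^2(\TT)$ and lies in $\overline{H^2_0}$. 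This is the one place where $x_0\in L^\infty(\TT)$ is used: multiplication by $x_0$ is bounded on $L^2(\TT)$, so $\big(\sum_{k=1}^n\beta_k e^{-ikt}\big)x_0\to h x_0$ in $L^2$, hence $U^{-n}g_n\to g:=f-h x_0$ in $L^2$. As $U$ is unitary, $\|g_n\|=\|U^{-n}g_n\|\to\|g\|$, and passing to the limit in the norm identity yields $\|f\|^2=\|g\|^2+\|h\|^2$. Thus $f\mapsto(g,h)$ is a well-defined linear isometry of $M$ into $L^2(\TT)\times\overline{H^2_0}$; let $K$ be its range. Because $M$ is complete and the map is isometric, $K$ is a closed subspace, and $M=\{g+h x_0:(g,h)\in K\}$ by construction.

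It remains to check that $K$ is invariant under $U\oplus P_-U$. Running the algorithm on $g_1\in M$, its pair is read off from the \emph{shifted} data $(g_2,\beta_2),(g_3,\beta_3),\dots$: the second coordinate is $\sum_{k\ge2}\beta_k e^{-i(k-1)t}$, which equals $P_-Uh$ because $Uh=\beta_1+\sum_{k\ge2}\beta_k e^{-i(k-1)t}$ and $P_-$ kills the constant while fixing the remaining part (already in $\overline{H^2_0}$). For the first coordinate, $Ug=Uf-(Uh)x_0=(g_1+\beta_1 x_0)-(\beta_1+P_-Uh)x_0=g_1-(P_-Uh)x_0$, which is precisely the first coordinate of the pair associated to $g_1$. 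Hence $(U\oplus P_-U)(g,h)\in K$, as desired.

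The point where the argument departs from the unilateral case of Theorem~\ref{thm:maino} is that $U^{-1}$ is unitary rather than $C_{0\cdot}$, so one cannot force $\|g_n\|\to0$ and reduce the iteration to coefficients against $x_0$ alone; in general $\|g_n\|$ converges to a positive limit (for instance $M=H^2$, $x_0=e^{-it}$ gives $g_n=U^nf$, so $g=f$). The remedy is to retain the limiting remainder $g=\lim_n U^{-n}g_n$ as the genuine first coordinate of $K$, which is exactly why $K$ lives in $L^2(\TT)\times\overline{H^2_0}$ rather than in a space of pairs of Hardy functions. The two things to be careful about are the $L^\infty$-hypothesis on $x_0$ (needed to push the $L^2$-convergent series $\sum\beta_k e^{-ikt}$ through the multiplication operator) and the linear dependence of all the $g_n,\beta_k$, hence of $(g,h)$, on $f$, so that $K$ genuinely is a linear subspace.
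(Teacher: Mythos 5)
Your proof is correct and follows essentially the same route as the paper's: the same peeling-off iteration $Uf = g_1+\beta_1 x_0$, the same norm identities, the same isometric correspondence $f\mapsto (g,h)$ onto a closed subspace $K$, and the same observation that $g_1$ corresponds to $(Ug, P_-Uh)$. If anything, you are slightly more explicit than the paper about the two delicate points — that $x_0\in L^\infty(\TT)$ is what lets the $L^2$-convergent series $\sum\beta_k e^{-ikt}$ be multiplied by $x_0$, and hence forces $U^{-n}g_n$ to converge — which the paper's proof passes over with a ``hence.''
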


\begin{proof}
Take $m_0 \in M$; then we can write $Um_0 = m_1 + \lambda_0 x_0$, where $m_1 \in M$ and $\lambda_0 \in \CC$.
Hence $$m_0(z) = m_1(z)/z + \lambda_0 x_0/z, \qquad (z\in \mathbb{D}),$$ and
by orthogonality $\|m_0\|^2= \|m_1\|^2 + |\lambda_0|^2$.

Repeating this decomposition for $Um_1$, and continuing, we arrive at
\[
m_0(z) = \frac{m_n(z)}{z^n} + \left(  \frac{\lambda_{0}}{z}  + \ldots +\frac{\lambda_{n-1}}{z^n}\right) x_0(z),
\]
with
\[
\|m_0\|^2 = \|m_n\|^2 + \sum_{j=0}^{n-1} |\lambda_j|^2.
\]
Clearly, letting $n \to \infty$, we see that $\lambda_0/z+ \ldots + \lambda_{n-1}/z^n $ converges in $L^2$ norm to
some $h \in \overline{H^2_0}$. Hence $m_n(z)/z^n$ also converges in $L^2$, with limit, $g$, say, and
we have
$ \|g\|^2 + \|h\|^2 = \|m_0\|^2$.

The set of pairs  $(g,h)$ that can occur is clearly a linear subspace, and the fact that it is closed
follows because it is the image of $M$ under an isometric mapping. Moreover, if $m_0$ corresponds to
$(g,h)$, then $m_1$ corresponds to $(Ug, P_-Uh)$.
\end{proof}

Note that the adjoint of $U \oplus P_- U$ is $U^* \oplus U^*_{|\overline{H^2_0}}$, and its invariant
subspaces are known thanks to the classical results of Lax--Beurling and Wiener. Thus we have a
complete description in this case. Moreover, if $M$ has the representation \eqref{eq:repm}, it is
clearly an almost-invariant subspace for $U$ with defect 1.\\

In general, denote by $\langle x_0 \rangle$ the smallest invariant subspace for $U$ generated by $x_0$.  Then the closure of $M\oplus \langle x_0\rangle$ is invariant by $U$, and therefore $M^\perp \cap \langle x_0 \rangle^\perp$ is a closed invariant subspace for $U^{-1}$.

Obviously this information is useful only in the case where $\langle x_0 \rangle$ is not the whole space, which is a condition that we can reformulate thanks to Helson's theorem.
\begin{theorem}[\cite{Hel}]
	Let $x_0\in L^2(\TT)$. The following assertions are equivalent:
\begin{enumerate}
	\item $\langle x_0 \rangle =L^2(\TT)$;
	\item $|x_0(\xi)|>0$ a.e.  on $\TT$ and $\int_{\TT}\log |x_0(\xi)|d\xi=-\infty.$
\end{enumerate}	
\end{theorem}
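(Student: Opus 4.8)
The plan is to run the Lax--Beurling and Wiener classification of the $U$-invariant subspaces, recalled at the beginning of this section, on the subspace $M:=\langle x_0\rangle=\span\{U^nx_0:n\ge0\}$, and to match the dichotomy ``$UM=M$'' versus ``$UM\subsetneq M$'' against the two analytic conditions in~(2). Both implications then come down to two classical facts: first, that a non-zero $H^2$ function has log-integrable boundary modulus (Jensen's inequality; see e.g.\ \cite{Hoffman} or \cite{Du}); and second, that every measurable $\varphi\ge0$ on $\TT$ with $\varphi\in L^2(\TT)$ and $\log\varphi\in L^1(\TT)$ is the boundary modulus of an outer function in $H^2$.

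For $(2)\Rightarrow(1)$, assume $|x_0|>0$ a.e.\ and $\int_{\TT}\log|x_0|\,d\xi=-\infty$, and apply the dichotomy to $M$. If $UM=M$, Wiener's theorem gives a Borel set $\Omega\subseteq\TT$ with $M=\{f\in L^2(\TT):f=0 \mbox{ a.e. on }\Omega\}$; since $x_0\in M$ and $|x_0|>0$ a.e., the set $\Omega$ is null, so $M=L^2(\TT)$, as wanted. If instead $UM\subsetneq M$, the Lax--Beurling theorem gives a unimodular $\theta\in L^\infty(\TT)$ with $M=\theta H^2$, whence $x_0=\theta g$ with $g\in H^2$ and $g\not\equiv0$; but then $|g|=|x_0|$ a.e.\ and $\log|g|\in L^1(\TT)$, contradicting $\int_{\TT}\log|x_0|\,d\xi=-\infty$. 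So only the first case can occur, and $M=L^2(\TT)$.

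For $(1)\Rightarrow(2)$ I would argue by contraposition, so suppose that (2) fails. If $|x_0|$ vanishes on a set $\Omega$ of positive measure, then every $U^nx_0$, hence every element of $M$, vanishes a.e.\ on $\Omega$, so $M\subsetneq L^2(\TT)$. Otherwise $|x_0|>0$ a.e.\ while $\int_{\TT}\log|x_0|\,d\xi>-\infty$; since also $\log^+|x_0|\le|x_0|\in L^2(\TT)\subseteq L^1(\TT)$, it follows that $\log|x_0|\in L^1(\TT)$, and hence there is an outer function $h\in H^2$ with $|h|=|x_0|$ a.e.\ on $\TT$ (the standard construction, \cite{Hoffman}; it lands in $H^2$ because $|h|=|x_0|\in L^2(\TT)$). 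Put $\theta:=x_0/h$, which is unimodular a.e.\ on $\TT$. Since multiplication by $\theta$ is a unitary operator on $L^2(\TT)$ commuting with $U$, it carries $\span\{U^nh:n\ge0\}$ onto $M$; and $\span\{U^nh:n\ge0\}=H^2$ because outer functions are cyclic for the shift on $H^2$ (Beurling's theorem, \cite{Beu}), the closed span being the same whether taken in $L^2(\TT)$ or in $H^2$. Hence $M=\theta H^2\subsetneq L^2(\TT)$, and (1) fails.

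There is no substantial obstacle here: the content lies entirely in correctly assembling the classical ingredients and in observing that the Lax--Beurling/Wiener dichotomy already stated in this section does most of the work. The step I would be most careful with is the second sub-case of $(1)\Rightarrow(2)$, namely checking that the hypotheses $x_0\in L^2(\TT)$ and $\int_{\TT}\log|x_0|\,d\xi>-\infty$ genuinely produce a factorization $x_0=\theta h$ with $\theta$ unimodular and $h\in H^2$ outer, and then invoking cyclicity of the outer factor to identify $\span\{U^nh:n\ge0\}$ with $H^2$ itself rather than with a proper model space $\theta_1 H^2$.
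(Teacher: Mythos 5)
Your proof is correct. Note, however, that the paper does not prove this statement at all: it is quoted as Helson's theorem with a citation to \cite{Hel}, so there is no in-paper argument to compare against. Your derivation --- running the Wiener/Lax--Beurling dichotomy on $\langle x_0\rangle$, using log-integrability of boundary moduli of nonzero $H^2$ functions to kill the $\theta H^2$ case under hypothesis (2), and, for the converse, building an outer $h$ with $|h|=|x_0|$ and using cyclicity of outer functions to get $\langle x_0\rangle=\theta H^2\subsetneq L^2(\TT)$ --- is the standard one, and every step (well-definedness and unimodularity of $\theta=x_0/h$, closedness of the span under the unitary $M_\theta$, and the coincidence of the $L^2$- and $H^2$-closures) checks out.
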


Assume that $x_0$ vanishes on a Borel subset $\Omega$ of $\TT$ of positive measure, and denote by $\Omega^c$ its complement set in $\TT$.
Using the Lax--Beurling theorem, it follows that $\langle x_0\rangle ^\perp=\{f\in L^2(\TT):f(\xi)=0\mbox{ a.e. on  }\Omega^c\} $ and
then  there exists a Borel subset $\Omega_1\supset \Omega$ of $\TT$ such that
\[M^\perp\cap \langle x_0\rangle^\perp =\{f\in L^2(\TT): f(\xi)=0 \mbox{ a.e. on }\Omega_1\}.\]

Assume now that $|x_0(\xi)|>0$ a.e. on $\TT$ and that $\log|x_0|\in L^1(\TT)$. Using the Lax--Beurling theorem, there exists $\theta\in L^\infty (\TT)$ taking values on the unit circle $a.e.$ such that $\langle x_0\rangle^\perp =\theta \overline{H^2_0}$.  It follows that \[M^\perp\cap \theta \overline{H^2_0}=\theta_1  \overline{H^2_0},\]
with $\theta_1$ of modulus 1 a.e on the unit circle, and such that
$\theta_1 \overline{H^2_0}\subset \theta \overline{H^2_0}$. This last inclusion is equivalent to  $\theta H^2\subset \theta_1 H^2$, which means that there exists an inner function, say $I$, such that $\theta =I \theta_1$.

 \begin{remark}
 As in Remark~\ref{rem:33}, we see that we can have $U(M) \subset M \oplus \CC x_0$, with $M$ not containing any nontrivial invariant subspace for $U$. For if $M= \theta \overline{H^2_0}$, with $\theta$ unimodular,
 then we cannot have $M \supset \chi_E L^2(\TT)$ for any nontrivial subset $E \subset \TT$ (clearly), nor
 $M \supset \phi H^2$ for $\phi$ unimodular, since in the second case we could write $\phi= \theta \overline z \overline g$,
 where $g \in H^2$ and is necessarily inner; then $\theta \overline{H^2_0} \supset \theta  \overline z \overline g H^2$,
 which is a contradiction since the right-hand side contains the function $\theta$.
\end{remark}

Finally, we would like to pose the following question:\\

\begin{center}
Characterize the bilateral shift half-invariant subspaces with finite defect in $L^2(\mathbb{T})$.
\end{center}

\vspace*{0,5cm}

\section*{Acknowledgements}
This project was initiated in September 2015, during the second and third authors' research visit to Institut Camille Jordan, at Universit\'e Lyon I. They are grateful for the hospitality and the inspiring environment during their stay.

\end{document}